\newtheorem{theorem}{Theorem}[section]
\newtheorem{lemma}[theorem]{Lemma}
\newtheorem{proposition}[theorem]{Proposition}
\theoremstyle{definition}
\newtheorem{definition}[theorem]{Definition}
\newtheorem{example}[theorem]{Example}
\theoremstyle{remark}
\newtheorem{remark}[theorem]{Remark}
\numberwithin{equation}{section}
\def\separa{\hbox to 14 truecm{\hrulefill}}
\author[P. Danchev]{Peter Danchev}
\address{Institute of Mathematics and Informatics, Bulgarian Academy of Sciences, 1113 Sofia, Bulgaria}
\email{danchev@math.bas.bg}
\thanks{The first author was partially supported by the Bulgarian National Science Fund under Grant KP-06 No. 32/1 of December 07, 2019.}
\author[E. Garc\'\i a]{Esther Garc\'\i a}
\address{ Departamento de Matem\'{a}tica  Aplicada, Ciencia e Ingenier\'{\i}a de los Materiales y Tecnolog\'{\i}a Electr\'onica,
Universidad Rey Juan Carlos, 28933 M\'{o}s\-to\-les (Madrid), Spain}
\thanks{The second author was partially supported by Ayuda Puente 2022, URJC}
\email{esther.garcia@urjc.es}
\author[M. G\'omez Lozano]{Miguel G\'omez Lozano}
\address{Departamento de \'Algebra, Geometr\'{\i}a y
Topolog\'{\i}a, Universidad de M\'alaga, 29071 M\'alaga, Spain}
\thanks{The three authors were partially supported by  the Junta de Andaluc\'{\i}a FQM264.}
\email{miggl@uma.es}
\begin{document}

\title[Decompositions of Matrices into a Sum of Unit and Nilpotent Matrices]{Decompositions of Matrices into a Sum of \\ Invertible Matrices and Matrices of \\ Fixed Nilpotence}
\maketitle

\begin{abstract} For any $n\ge 2$ and fixed $k\ge 1$, we give necessary and sufficient conditions for an arbitrary nonzero square matrix in the matrix ring $\mathbb{M}_n(\mathbb{F})$ to be written as a sum of an invertible matrix $U$ and a nilpotent matrix $N$ with $N^k=0$ over an arbitrary field $\mathbb{F}$.
\end{abstract}

\bigskip
{\footnotesize \textit{Key words}: matrices, nilpotents, units, ranks}

{\footnotesize \textit{2010 Mathematics Subject Classification}: 15A21, 15A24, 15B99, 16U60}

\section{Introduction}

In 1977, when studying lifting properties of idempotents, Nicholson defined an element $a$ in a ring $R$ to be {\it clean} if it can be written in the form $e + u$, where $e$ is an idempotent and $u$ is a unit (i.e., an invertible); see \cite{N}. If every element in a ring $R$ is clean, the ring is called clean. Inspired by these definitions, in 2013 Diesl \cite{Di} defined a ring element $b\in R$ to be {\it nil-clean} if it can be expressed as a sum of an idempotent and a nilpotent element, and the ring $R$ is nil-clean if every element in $R$ is so.

By combining the notions of invertibility and nilpotence, C\v{a}lug\v{a}reanu and  Lam introduced in 2016 the notion of {\it fine} rings: those in which every nonzero element can be written as the sum of an invertible element and a nilpotent one; see \cite{CL}. One of the main results of the paper \cite{CL} is the fact that every nonzero square matrix over a division ring is the sum of an invertible matrix and a nilpotent matrix. Indeed, they proved that as soon as the division ring has more than two elements, every nonzero square matrix over such division ring is similar to what they call a matrix in {\it good form}, i.e., a matrix with all diagonal entries nonzero. By decomposing this last matrix into its (invertible) lower part and its strictly (nilpotent) upper part, one concludes matrix rings over division rings with more than two elements are fine. Moreover, they separately proved that nonzero matrices over $\mathbb{F}_2$ are also clean, reaching to the desired result.

In the same paper (see the Acknowledgements section), the authors remarked that there was no previous reference to the fact that every square nonzero matrix (even over the complex field) could be expressed as the sum of a nilpotent matrix and an invertible one. Notice that the nilpotent matrices in C\v{a}lug\v{a}reanu and Lam decomposition have high indices of nilpotence because they correspond to the strictly  upper part of a matrix in good form.

The rings whose nonzero idempotents are fine turned out to be an interesting class of indecomposable rings and were studied in \cite{CZ1} by  C\v{a}lug\v{a}reanu and Zhou. In 2021, the same authors focused on rings in which every nonzero nilpotent element is fine, which they called $NF$ rings, and showed that for a commutative ring $R$ and $n\ge 2$, the matrix ring $\mathbb{M}_n(R)$ is $NF$ if and only if $R$ is a field; see \cite{CZ}.

On the same vein, a slightly more general class of rings than the aforementioned class of fine rings was defined in \cite{Da} under the name {\it nil-good rings} and some their characteristic properties, including the behaviour of the matrix ring over a nil-good ring, were explored in \cite{CMN} and \cite{GS}, respectively.

\medskip

In our work, we begin by fixing a bound  $k\ge 1$ for the index of nilpotence of the nilpotent part and pose the following problem for matrices over fields:

\medskip

{\bf Problem:} {\it Given $k\ge 1$, find necessary and sufficient conditions to decompose any nonzero square matrix $A$ over a field $\mathbb{F}$ as a sum of an invertible matrix $U$ and a nilpotent matrix $N$ with $N^k=0$.}

\begin{remark}\label{necessarycondition} Notice that the problem of decomposing a matrix as the sum of a unit matrix and a nilpotent matrix of index at most $k$ is not true in general. In fact, invertible square matrices have full rank, and the rank of a nilpotent matrix of index $k$ is the sum of the rank of every nilpotent block of index $k_i$ (whose rank is $k_i-1$) in the Jordan canonical form of $N$. Therefore, in the matrix ring $\mathbb{M}_n(\mathbb{F})$ over the field $\mathbb{F}$, if we decompose $n=ck+d$ where $c,d\in \mathbb{N}$ and $0\le d<k$, the rank of every nilpotent matrix is  less than or equal to
$$\left\{
\begin{array}{ll}
 c(k-1)=n-\frac nk, & \hbox{if $d=0$;}\\
c(k-1)+d-1=n-\frac nk+\frac dk-1\le n-\frac nk, & \hbox{if $d>0$}
\end{array}
\right.$$
($c$ blocks of index $k$, and one block of index $d$ when $d>0$, in its Jordan canonical form), so a necessary condition for this decomposition to hold is that the rank of the original matrix must be greater than or equal to $\frac nk$. To illustrate this more concretely, let $k\ge 2$, suppose $n\ge k+2$ and let $A=e_{12}\in \mathbb{M}_{n}(\mathbb{F})$ be the standard matrix. If we assume in a way of contradiction that $A=U+N$, where $U$ is an invertible matrix and $N^k=0$, then one may write that $U=A-N$. But the rank of an invertible matrix is always maximal (that is, exactly $n$ in this case), whereas the rank of $A$ is one and the rank of $N$ is $\le n-\frac nk$, so it cannot be recovered a rank $n$ matrix from a matrix of rank $1$ and a matrix of rank at most $n-\frac nk$.
\end{remark}

In this paper we completely solve this problem for matrices over arbitrary fields, proving that following result:

\medskip

{\bf Theorem.} {\it Let $\mathbb{F}$ be a field, let $n\ge 2$, and let us fix $k\ge 1$. Given a nonzero matrix $A\in \mathbb{M}_{n}(\mathbb{F})$, there exists an invertible matrix $U\in \mathbb{M}_{n}(\mathbb{F})$ and a nilpotent matrix $N\in \mathbb{M}_{n}(\mathbb{F})$ with $N^k=0$ such that $A=U+N$ if, and only if, the rank of $A$ is greater than or equal to $\frac{n}{k}$.}

\medskip

Since the properties invertibility and nilpotence are both invariant conditions under similarity, we will use the primary rational canonical form of a matrix (\cite[VII.Corollary 4.7(ii)]{H}), which states that every matrix $A\in \mathbb{M}_n(\mathbb{F})$, where $\mathbb{F}$ is a field, is similar to a direct sum of companion matrices of prime power polynomials $p_1^{m_{11}}, \dots, p_s^{m_{sk_s}}\in \mathbb{F}[x]$ where each $p_i$ is prime (irreducible) in $\mathbb{F}[x]$. The matrix $A$ is uniquely determined except for the order of these companion matrices. The polynomials $p_1^{m_{11}}, \dots, p_s^{m_{sk_s}}$ are called {\it the elementary divisors} of the matrix $A$.

\section{Decomposing Matrices into a Sum of Invertible and Nilpotent Matrices}

In our argument we will separate the elementary divisors $q(x)$ of a matrix $A\in \mathbb{M}_{n}(\mathbb{F})$ between those  that satisfy  $q(0)\ne 0$ and those with $q(0)=0$, i.e., $q(x)=x^m$, $m\ge 1$. Among these last ones, we will also distinguish between those of degree $1$ and those of degree bigger than $1$:

\medskip

\begin{itemize}
  \item [(i)] Any elementary divisor $q(x)=x^m+a_{m-1}x^{m-1}+\dots+a_0$ with $q(0)=a_0\ne 0$ gives rise to an invertible companion matrix
   $$
   C(q(x))=\left(
          \begin{array}{cccc}
            0 & 0 & \dots  & -a_0 \\
            1 & 0 &  &  \vdots\\
             & \ddots & \ddots &  \\
            0 &  & 1 & -a_{m-1}\\
          \end{array}
        \right)\in \mathbb{M}_{m}(\mathbb{F}).
  $$

  \medskip

  \item[(ii)] Any elementary divisor of the form $q(x)=x$ gives rise to the $1\times 1$ companion matrix $ C(x)=(0)$.

  \medskip

  \item[(iii)] Any elementary divisor of the form $q(x)=x^m$, $m>1$, gives rise to a companion  of the form
  $$
   C(x^m)=\left(
          \begin{array}{cccc}
            0 & 0 & \dots  & 0 \\
            1 & 0 &  &  \vdots\\
             & \ddots & \ddots &  \\
            0 &  & 1 & 0\\
          \end{array}
        \right)=\sum_{i=1}^{m-1}e_{i+1,i}\in \mathbb{M}_{m}(\mathbb{F}).
  $$
\end{itemize}

\begin{definition} Let $\mathbb{F}$ be a field, let $n\ge 2$, and let us fix an index of nilpotence $k$ with $2\le k\le n$. For each $1\le r<s\le n$ such that $s+k-2\le n$, we define the following matrices, which will be the ingredients of our main result:
$$
N_{r,s,k}:=e_{r,r}+e_{s,r}-e_{r,s+k-2}-\sum_{i=0}^{k-2} e_{s,s+i}+\sum_{i=0}^{k-3} e_{s+i+1,s+i}\in \mathbb{M}_{n}(\mathbb{F}).
$$
\end{definition}

We begin our work with a series of technicalities, which we need to establish our chief result.

\begin{lemma}\label{Nrsk} Let $\mathbb{F}$ be a field, let $n\ge 2$, and let us fix an index of nilpotence $k$ with $2\le k\le n$. For each $1\le r,s\le n$ such that $s+k-2\le n$, the matrices $N_{r,s,k}$ have rank equal to $k-1$ and are nilpotent of index $k$.
\end{lemma}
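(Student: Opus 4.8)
The plan is to exploit the sparsity of $N:=N_{r,s,k}$: all of its nonzero entries lie in the rows and columns indexed by the $k$-element set $T:=\{r\}\cup\{s,s+1,\dots,s+k-2\}$, where I take $r\notin\{s,\dots,s+k-2\}$ (this holds in particular under the convention $r<s$ of the Definition). Reading the defining formula off column by column, I would first record that $Ne_t=0$ for every standard basis column vector $e_t$ of $\mathbb{F}^n$ with $t\notin T$, while
$$Ne_r=e_r+e_s,\qquad Ne_{s+j}=e_{s+j+1}-e_s\ \ (0\le j\le k-3),\qquad Ne_{s+k-2}=-e_r-e_s.$$
Thus everything happens on the $k$-dimensional subspace $W:=\langle e_r,e_s,\dots,e_{s+k-2}\rangle$, which these identities show is $N$-invariant, while $N$ annihilates the complementary coordinate subspace spanned by $\{e_t:t\notin T\}$.

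Next I would exhibit $v:=e_r$ as a cyclic vector for $N$ on $W$. A one-line induction built on the three identities above gives
$$N^pv=e_r+e_{s+p-1}\quad(1\le p\le k-1),\qquad N^kv=0,$$
with inductive step $N(e_r+e_{s+p-1})=(e_r+e_s)+(e_{s+p}-e_s)=e_r+e_{s+p}$ and terminal step $N(e_r+e_{s+k-2})=(e_r+e_s)+(-e_r-e_s)=0$. Then $v,Nv,\dots,N^{k-1}v$ are $e_r,\,e_r+e_s,\,\dots,\,e_r+e_{s+k-2}$, which are visibly linearly independent and span $W$; in particular $N^{k-1}v\ne 0$, so $N^{k-1}\ne 0$.

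To finish I would deduce $N^k=0$ and that the rank of $N$ equals $k-1$. Since $v,Nv,\dots,N^{k-1}v$ is a basis of $W$, each $e_{s+j}$ with $0\le j\le k-2$ equals $N^{j+1}v-v$, whence $N^ke_{s+j}=N^{k+j+1}v-N^kv=0$; combined with $N^ke_r=N^kv=0$ and $Ne_t=0$ for $t\notin T$, this yields $N^k=0$, so $N$ is nilpotent of index exactly $k$. For the rank, applying $N$ to the basis of $W$ and discarding $N^kv=0$ gives $N(\mathbb{F}^n)=N(W)=\langle Nv,N^2v,\dots,N^{k-1}v\rangle$, a space of dimension $k-1$; equivalently, in the cyclic basis $v,Nv,\dots,N^{k-1}v$ the restriction $N|_W$ is the single nilpotent Jordan block of size $k$, of rank $k-1$, and $N$ vanishes on the complementary invariant subspace. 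Either way the rank of $N$ is $k-1$.

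I do not expect a genuine difficulty here — this is essentially a bookkeeping lemma — but the one point requiring care is the off-by-one in the nilpotence step: from $N(\mathbb{F}^n)\subseteq W$ and $(N|_W)^k=0$ one only gets $N^{k+1}=0$, so one must use the sharper fact that the image $N(\mathbb{F}^n)=N(W)$ lies in the hyperplane $\langle Nv,\dots,N^{k-1}v\rangle$ of $W$, on which $N^{k-1}$ already vanishes — which is exactly what the identity $N^ke_{s+j}=N^{k+j+1}v-N^kv$ encodes. I would also double-check the degenerate case $k=2$, where the middle family of identities above is empty.
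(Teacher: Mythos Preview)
Your proof is correct and is essentially the same approach as the paper's: both show that $N_{r,s,k}$ is similar to a single nilpotent Jordan block of size $k$ (direct sum with zero), from which the rank and nilpotence index follow immediately. The paper goes in the converse direction---it starts from the standard Jordan block $N=\sum_{i=1}^{k-1}e_{i+1,i}$ and exhibits an explicit basis $\mathcal{B}_{r,s,k}$ (with $v_r=e_1$ and $v_{s+j}=e_{j+2}-e_1$) in which $N$ is represented by $N_{r,s,k}$---whereas you start from $N_{r,s,k}$ and discover the cyclic basis $v,Nv,\dots,N^{k-1}v$ directly; your cyclic basis is precisely the inverse of the paper's change of basis.
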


\begin{proof} Let $N=\sum_{i=1}^{k-1}e_{i+1,i}\in \mathbb{M}_{n}(\mathbb{F})$ be the  matrix consisting on a single  nilpotent Jordan block of size $k$ and $n-k$ blocks of size 1. By construction, $N$ is nilpotent of index $k$.

We claim that each $N_{r,s,k}$ can be obtained from the  matrix $N$ by an appropriate change of basis. If we denote by $\{e_1, e_2,\dots, e_k,e_{k+1},\dots\}$ the canonical basis,
the matrix
$$N_{1,2,k}=\left(
                  \begin{array}{cccccc|ccc}
                    1 & 0 & 0 & \cdots & 0 & -1 &0&\cdots&0\\
                    1 & -1 & -1 & \cdots & -1 & -1 &0&\cdots&0\\
                    0 & 1 & 0 & 0 & 0 & 0 &0&\cdots&0\\
                    0 & 0 & 1 & 0 & 0 & 0&0&\cdots&0 \\
                    \vdots & \vdots & \vdots & \ddots & \vdots & \vdots& \vdots& \vdots& \vdots \\
                    0 & 0 & 0 & 0 & 1 & 0 &0&\cdots&0\\
\hline
                     0 & 0 & 0 & 0 & 0 & 0 &0&\cdots&0\\
                    \vdots & \vdots & \vdots & \ddots & \vdots & \vdots& \vdots& \vdots& \vdots \\
                     0 & 0 & 0 & 0 & 0 & 0 &0&\cdots&0\\
                  \end{array}
                \right)
$$
is just the same operator $N$ represented on the basis
$$
\mathcal{B}_{1,2,k}=\{v_1=e_1,\, v_2=e_2-e_1,\, \dots,v_k=e_k-e_1,\, v_{k+1}=e_{k+1},\dots\}.
$$
Similarly, if we consider the basis $\mathcal{B}_{r,s,k}=\{v_1,\dots, v_n\}$ where
\begin{align*}
& v_r=e_1\\
& v_{s}=e_2-e_1,\\
& v_{s+1}=e_3-e_1,\\
&\vdots\\
& v_{s+k-2}=e_k-e_1,\\
\end{align*}
and the rest of the vectors $v_i$ of $\mathcal{B}_{r,s,k}$ are any reordering of the vectors $e_{k+1}$, \dots, $e_n$, the matrix $N_{r,s,k}$ is the representation of the operator $N$ on the basis  $\mathcal{B}_{r,s,k}$.

The rank and the index of nilpotence of the matrices $N_{r,s,k}$ is a direct consequence of the rank and the index of nilpotence of the original matrix $N$, as claimed.
\end{proof}

\begin{proposition}\label{invB} Let $\mathbb{F}$ be a field, let $n\ge 2$, and let us fix $k\ge 1$. Also, let $B\in \mathbb{M}_{n}(\mathbb{F})$ be a matrix consisting of a single invertible block of type (i) and size $t$, and also of $r=n-t$ nilpotent blocks of type (ii). If $r\le t(k-1)$ (or, equivalently, ${\rm rank}(B)=t\ge n/k$), then there exists a nilpotent matrix $N_B$ with $N_B^k=0$ such that $B+N_B$ is invertible.
\end{proposition}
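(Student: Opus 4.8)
The plan is to build $N_B$ as a sum of the elementary pieces $N_{r,s,k}$ of Lemma~\ref{Nrsk}, one for each ``batch'' of zero blocks of $B$, with each batch anchored to a coordinate of the invertible companion block; then the nilpotence index will be automatic from disjointness of supports, and invertibility of $B+N_B$ will follow from a short column count. Some reductions first. If $k=1$, the hypothesis $r\le t(k-1)=0$ forces $r=0$, so $B=C(q(x))$ is already invertible and $N_B=0$ works; if $k>n$ we may replace $k$ by $n$, since $r=n-t\le n-1\le t(n-1)$ shows the hypothesis persists. So assume $2\le k\le n$, and (conjugating by a permutation matrix if necessary) that the invertible block $C:=C(q(x))$ sits on the coordinates $\{1,\dots,t\}$ and the $r=n-t$ zero blocks on $\{t+1,\dots,n\}$. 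If $r=0$ take $N_B=0$; otherwise put $m:=\lceil r/(k-1)\rceil$, note $m\le t$ because $r\le t(k-1)$, write $r=(m-1)(k-1)+\rho$ with $1\le\rho\le k-1$, split $\{t+1,\dots,n\}$ into $m$ consecutive segments $\{s_l,s_l+1,\dots,s_l+k_l-2\}$ of sizes $k-1,\dots,k-1,\rho$ (so $2\le k_l\le k$ for each $l$), choose pairwise distinct anchors $r_1,\dots,r_m\in\{1,\dots,t\}$, and set
\[
N_B:=\sum_{l=1}^{m}N_{r_l,s_l,k_l}.
\]
Each $N_{r_l,s_l,k_l}$ is supported on $I_l:=\{r_l\}\cup\{s_l,\dots,s_l+k_l-2\}$, and the $I_l$ are pairwise disjoint, so $N_{r_l,s_l,k_l}N_{r_{l'},s_{l'},k_{l'}}=0$ for $l\ne l'$; hence $N_B^{k}=\sum_l N_{r_l,s_l,k_l}^{k}=0$ by Lemma~\ref{Nrsk}.

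The heart of the argument is that $B+N_B$ is invertible, and I would prove this by checking that its columns span $\mathbb{F}^n$. Using $Be_j=Ce_j$ for $j\le t$ and $Be_j=0$ for $j>t$, and reading the columns of $N_{r,s,k}$ off its definition, the columns of $B+N_B$ are: $-e_{s_l}+e_{s_l+i+1}$ for the coordinate $s_l+i$ with $0\le i\le k_l-3$; $-e_{r_l}-e_{s_l}$ for the last coordinate $s_l+k_l-2$ of segment $l$; $Ce_j$ for an invertible coordinate $j\notin\{r_1,\dots,r_m\}$; and $Ce_{r_l}+e_{r_l}+e_{s_l}$ for the coordinate $r_l$. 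Adding the $r_l$-th column to the last-coordinate column of segment $l$ recovers $Ce_{r_l}$, so the span of the columns contains every $Ce_j$, $1\le j\le t$, i.e.\ all of $\langle e_1,\dots,e_t\rangle$ (this is where invertibility of $C$ enters). Consequently each $e_{r_l}$, and hence $e_{s_l}=-(-e_{r_l}-e_{s_l})-e_{r_l}$, lies in the span; and then so does $e_{s_l+i+1}=(-e_{s_l}+e_{s_l+i+1})+e_{s_l}$ for $0\le i\le k_l-3$. Ranging over $l$, this adds all of $\langle e_{t+1},\dots,e_n\rangle$, so the columns span $\mathbb{F}^n$ and $B+N_B$ is invertible.

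The obstacle here is not a hard computation but the combinatorial fit: one needs precisely $r\le t(k-1)$ to guarantee that $m\le t$ distinct anchors are available, and one needs the supports $I_l$ disjoint to keep the nilpotence index at most $k$. One should also note that $B$ is \emph{not} block-diagonal along the segments --- the companion block $C$ couples all of $\{1,\dots,t\}$ --- so invertibility of $B+N_B$ cannot be verified segment by segment; the telescoping in the previous paragraph, which first recovers $\langle e_1,\dots,e_t\rangle$ and then strips off the zero coordinates one batch at a time, is exactly what circumvents this.
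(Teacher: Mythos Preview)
Your proof is correct and follows essentially the same construction as the paper's: partition the $r$ zero blocks into batches of size at most $k-1$, anchor each batch to a distinct coordinate of the companion block via an $N_{r,s,k}$, and use disjointness of supports to control the nilpotence index. The only cosmetic difference is in the invertibility check---the paper performs the same column operation (combining the anchor column with the last column of its batch) but then reads off a block upper-triangular determinant with $C(q(x))$ and $J_{k-1}$ blocks on the diagonal, whereas you argue directly that the columns span $\mathbb{F}^n$; both are the same computation.
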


\begin{proof}
By hypothesis, the matrix $B$ consists of an invertible block of the form $C(q(x))$, for some polynomial $q(x)$ of degree $t$ with $q(0)\ne 0$, and $r$ nilpotent blocks of type (ii). If $r=0$, we just take $N_B=0$ since $B$ itself is invertible. For the rest of the proof, suppose that $r>0$.

Let us use the classical division theorem to write $r=c(k-1)+d$ with $0\le d< k-1$ ($c$ represents the number of nilpotent matrices of type $N_{r,s,k}$ that we will use in our argument and $d$, if nonzero, means  an extra nilpotent matrix of type $N_{r,s,d+1}$). The condition $r\le t(k-1)$ means that
$$
\left\{
  \begin{array}{ll}
    c\le t, & \hbox{if $d=0$;} \\
    c+1\le t, & \hbox{if $d>0$.}
  \end{array}
\right.
$$

\noindent -- ~ If $c>0$, we consider the matrix
$$
N_{1,t+1,k}+N_{2,t+1+(k-1),k}+\dots+N_{c, t+1+(c-1)(k-1),k}
=\sum_{i=1}^{c} N_{i, t+1+(i-1)(k-1), k}.
$$
By construction, $(\sum_{i=1}^{c} N_{i, t+1+(i-1)(k-1), k})^k=0$ (the matrices $N_{r,s,k}$ appearing in this sum are nilpotent of index $k$ by Lemma \ref{Nrsk} and  they are two-by-two orthogonal).

\noindent -- ~ If $d>0$, we consider $N_{c+1,t+1+c(k-1),d+1}$ which satisfies $(N_{c+1,t+1+c(k-1),d+1})^k=0$ because $d+1<k$; $N_{c+1,t+1+c(k-1),d+1}$ is orthogonal to $\sum_{i=1}^{c} N_{i, t+1+(i-1)(k-1), k}$.

Define the nilpotent matrix
$$
N_B=\underbrace{\sum_{i=1}^{c} N_{i, t+1+(i-1)(k-1), k}}_{\hbox{if $c>0$}}+\underbrace{N_{c+1,t+1+c(k-1),d+1}}_{\hbox{if $d>0$}}, \qquad {N_B^k=0.}
$$

We assert that the matrix $U_B=B+N_B$ is invertible. Indeed, since the determinant of a matrix remains the same if we replace some columns by the original columns to which we add some other columns, \begin{itemize}
\item we add to the first column of $U_B$ the one in position $t+k-1$,
\item we add to the second column of $U_B$ the one in position $t+2k-2$,
\item[] $\vdots$
\item we add to the $c$-column of $U_B$ the one in position $t+c(k-1)$,
\item if $d>0$, we add to the $c+1$-column of $U_B$ the one in position $t+c(k-1)+d$.
\end{itemize}
The condition $c\le t$ if $d=0$ and $c+1\le t$ if $d>0$  assures that these sums of columns in $U_B$ only affects, at most, to the first $t$-columns of $U_B$. We end up with a matrix of the form
$$
\left(
  \begin{array}{c|c|c|c|c}
    C(q(x)) & * & * & * & * \\
\hline
    0 & J_{k-1} & * & * & * \\
\hline
    0 & 0 & J_{k-1} & * & * \\
\hline
   0 & 0 & 0 & \ddots & * \\
\hline
    0 & 0 & 0 & 0 & J_{d} \\
  \end{array}
\right)
$$
where
$$
J_{r}=\left(
          \begin{array}{cccc}
            -1 & -1 & \dots  & -1 \\
            1& 0 & \dots & 0 \\
            0 & \ddots &  & 0 \\
            0 & 0 & 1 & 0 \\
          \end{array}
        \right)\in \mathbb{M}_{r}(\mathbb{F}),\quad  r=k-1 \hbox{ or } d.
$$
Since $\det(J_r)=(-1)^r$, $r=k-1$ or $d$, the determinant of $U_B$ coincides with $\pm$ the determinant of the companion matrix $C(q(x))$, which by hypothesis is nonzero, as required.
\end{proof}

The above proposition can be substantiate by the following concrete construction.

\begin{example}
Let us consider an index of nilpotence $k=5$ and the matrix
$$
B=\left(
    \begin{array}{cc|ccccccc}
      0 & 1 & 0 & 0 & 0 & 0 & 0 & 0 & 0 \\
      1 & 1 & 0 & 0 & 0 & 0 & 0 & 0 & 0 \\
\hline
      0 & 0 & 0 & 0 & 0 & 0 & 0 & 0 & 0 \\
      0 & 0 & 0 & 0 & 0 & 0 & 0 & 0 & 0 \\
      0 & 0 & 0 & 0 & 0 & 0 & 0 & 0 & 0 \\
      0 & 0 & 0 & 0 & 0 & 0 & 0 & 0 & 0 \\
      0 & 0 & 0 & 0 & 0 & 0 & 0 & 0 & 0 \\
      0 & 0 & 0 & 0 & 0 & 0 & 0 & 0 & 0 \\
      0 & 0 & 0 & 0 & 0 & 0 & 0 & 0 & 0 \\
    \end{array}
  \right)\in \mathbb{M}_{9}(\mathbb{F})
$$
consisting on the invertible block $C(x^2-x-1)$ of a degree $t=2$ polynomial and $r=7$ blocks of type (ii). The condition $r\le t(k-1)$ holds. Following the proof of Proposition \ref{invB}, we obtain $c=1$ and $d=3$ in the formula $n-t=c(k-1)+d$; hence we consider the nilpotent matrices
$$
N_{1,3,5}=\left(
    \begin{array}{ccccccccc}
      1 & 0 & 0 & 0 & 0 &-1 & 0 & 0 & 0 \\
      0 & 0 & 0 & 0 & 0 & 0 & 0 & 0 & 0 \\
      1 & 0 &-1 &-1 &-1 &-1 & 0 & 0 & 0 \\
      0 & 0 & 1 & 0 & 0 & 0 & 0 & 0 & 0 \\
      0 & 0 & 0 & 1 & 0 & 0 & 0 & 0 & 0 \\
      0 & 0 & 0 & 0 & 1 & 0 & 0 & 0 & 0 \\
      0 & 0 & 0 & 0 & 0 & 0 & 0 & 0 & 0 \\
      0 & 0 & 0 & 0 & 0 & 0 & 0 & 0 & 0 \\
      0 & 0 & 0 & 0 & 0 & 0 & 0 & 0 & 0 \\
    \end{array}
  \right), 
$$
$$
N_{2,7,4}= \left(
    \begin{array}{ccccccccc}
      0 & 0 & 0 & 0 & 0 &0 & 0 & 0 & 0 \\
      0 & 1 & 0 & 0 & 0 & 0 & 0 & 0 & -1 \\
      0 & 0 &0  &0  &0  &0 & 0 & 0 & 0 \\
      0 & 0 & 0 & 0 & 0 & 0 & 0 & 0 & 0 \\
      0 & 0 & 0 & 0 & 0 & 0 & 0 & 0 & 0 \\
      0 & 0 & 0 & 0 & 0 & 0 & 0 & 0 & 0 \\
      0 & 1 & 0 & 0 & 0 & 0 & -1 & -1 & -1 \\
      0 & 0 & 0 & 0 & 0 & 0 & 1  & 0  & 0 \\
      0 & 0 & 0 & 0 & 0 & 0 & 0  & 1  & 0 \\
    \end{array}
  \right). 
$$
Then $N_B=N_{1,3,5}+N_{2,7,4}$ satisfies $N_B^5=0$; moreover,
$$
B+N_B=\left(
    \begin{array}{cc|cccc|ccc}
      1 & 1 & 0 & 0 & 0 & -1 &0& 0 & 0 \\
      1 & 2 & 0 & 0 & 0 & 0 & 0 & 0 & -1 \\
\hline
      1 & 0 & -1 & -1 & -1 & -1 & 0 & 0 & 0 \\
      0 & 0 & 1 & 0 & 0 & 0 & 0 & 0 & 0 \\
      0 & 0 & 0 & 1 & 0 & 0 & 0 & 0 & 0 \\
      0 & 0 & 0 & 0 & 1 & 0 & 0 & 0 & 0 \\
\hline
      0 & 1 & 0 & 0 & 0 & 0 & -1 & -1 & -1 \\
      0 & 0 & 0 & 0 & 0 & 0 & 1 & 0 & 0 \\
      0 & 0 & 0 & 0 & 0 & 0 & 0 & 1 & 0 \\
    \end{array}
  \right)
$$
is invertible because, if we add the column 6 to column 1 and add column 9 to column 2, it would follow that
$$
\left(
    \begin{array}{cc|cccc|ccc}
      0 & 1 & 0 & 0 & 0 & -1 &0& 0 & 0 \\
      1 & 1 & 0 & 0 & 0 & 0 & 0 & 0 & -1 \\
\hline
      0 & 0 & -1 & -1 & -1 & -1 & 0 & 0 & 0 \\
      0 & 0 & 1 & 0 & 0 & 0 & 0 & 0 & 0 \\
      0 & 0 & 0 & 1 & 0 & 0 & 0 & 0 & 0 \\
      0 & 0 & 0 & 0 & 1 & 0 & 0 & 0 & 0 \\
\hline
      0 & 0 & 0 & 0 & 0 & 0 & -1 & -1 & -1 \\
      0 & 0 & 0 & 0 & 0 & 0 & 1 & 0 & 0 \\
      0 & 0 & 0 & 0 & 0 & 0 & 0 & 1 & 0 \\
    \end{array}
  \right)=\left(
            \begin{array}{c|c|c}
              C(x^2-x-1) & * & * \\
\hline
              0 & J_4 & * \\
\hline
              0 & 0 & J_3 \\
            \end{array}
          \right),
$$
which is clearly invertible, as expected.
\end{example}

\begin{proposition}\label{nilB} Let $\mathbb{F}$ be a field, let $n\ge 2$, and let us fix $k\ge 2$.
Let $C=\sum_{i=1}^{t-1}e_{i+1,i}\in \mathbb{M}_{n}(\mathbb{F})$ be a matrix consisting of a nilpotent block of type (iii) and size $t$, and $r=n-t$ nilpotent blocks of type (ii). If $r\le k-2+(t-1)(k-1)$ (or, equivalently, ${\rm rank}(C)=t-1\ge \frac nk$), then there exists a nilpotent matrix $N_C$ with $N_C^k=0$ and such that $C+N_{C}$ is invertible.
\end{proposition}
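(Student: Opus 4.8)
The plan is to follow the strategy of Proposition~\ref{invB}, adding one device that repairs the rank deficiency of the nilpotent block $C$. Throughout I use the hypothesis in its equivalent form ${\rm rank}(C)=t-1\ge\frac nk$, that is $n\le k(t-1)$, equivalently $r=n-t\le(k-1)(t-1)-1$ (this is exactly the bound forced by Remark~\ref{necessarycondition}, and the one the argument genuinely needs). The basic observation is that ``closing the nilpotent chain of $C$ back onto $e_1$'' turns $C$ into a type-(i) companion matrix: when $r=0$, $C=\sum_{i=1}^{n-1}e_{i+1,i}$ and $N_C:=e_{1,n}$ already works, since $C+N_C=C(x^n-1)$ is invertible and $N_C^2=0$, so $N_C^k=0$ because $k\ge2$. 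More generally the same trick absorbs a few $1\times1$ zero blocks if one threads the chain through their coordinates before closing it.

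So I would split on the size of $r$. If $r\le k-2$, take $N_C:=e_{1,n}+\sum_{i=t}^{n-1}e_{i+1,i}$; then $C+N_C=C(x^n-1)$ is invertible, while as an operator $N_C$ has the single chain $e_t\mapsto e_{t+1}\mapsto\cdots\mapsto e_n\mapsto e_1\mapsto 0$ of $r+2$ vectors and annihilates every other basis vector, so $N_C$ is nilpotent of index $r+2\le k$. If instead $r\ge k-1$, thread the chain of $C$ through only $k-2$ zero blocks: set $t':=t+k-2$, $r':=n-t'=r-(k-2)$, and $N_1:=e_{1,t'}+\sum_{i=t}^{t'-1}e_{i+1,i}$. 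Then $B':=C+N_1=\sum_{i=1}^{t'-1}e_{i+1,i}+e_{1,t'}$ is, in the standard ordered basis, the direct sum of the type-(i) block $C(x^{t'}-1)$ on the coordinates $1,\dots,t'$ and $r'$ type-(ii) blocks on $t'+1,\dots,n$; moreover $N_1$ has the single chain $e_t\mapsto\cdots\mapsto e_{t'}\mapsto e_1\mapsto0$ of length $k$, so $N_1^k=0$, and $N_1$ is supported only on the coordinates $\{1\}\cup\{t,t+1,\dots,t'\}$. Since $r\le(k-1)(t-1)-1$ yields $r'\le(k-1)(t-2)\le(k-1)t'$, Proposition~\ref{invB} applies to $B'$ and produces a nilpotent $N_{B'}$ with $N_{B'}^k=0$ and $B'+N_{B'}$ invertible; I then put $N_C:=N_1+N_{B'}$, so that $C+N_C=B'+N_{B'}$ is invertible.

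The one genuinely delicate point, and the main obstacle, is to check $N_C^k=0$ in this last case: $N_1$ and $N_{B'}$ can a priori both touch the coordinates $1$ and $t$, and a careless combination really does fail — already for $k=2$ one computes $N_C^2\ne0$ along a chain $e_t\mapsto e_1\mapsto\cdots$. The remedy is to run the construction of Proposition~\ref{invB} on $B'$ with the ``slots'' off which the blocks $N_{\bullet,\bullet,k}$ are hung chosen \emph{among the coordinates $2,3,\dots,t-1$} rather than $1,2,\dots$: writing $r'=c'(k-1)+d'$ with $0\le d'<k-1$, the inequality $r'\le(k-1)(t-2)$ forces $c'\le t-2$ (and $c'\le t-3$ when $d'>0$), so such slots exist, and the column-operation verification in Proposition~\ref{invB} uses only that the chosen slots are distinct coordinates of the companion block, hence carries over unchanged. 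With this choice $N_{B'}$ is supported on $\{2,\dots,t-1\}\cup\{t+k-1,\dots,n\}$, disjoint from the support $\{1\}\cup\{t,\dots,t+k-2\}$ of $N_1$; therefore $N_1N_{B'}=N_{B'}N_1=0$ and $N_C^k=(N_1+N_{B'})^k=N_1^k+N_{B'}^k=0$. In short, beyond the chain-closing idea the real work is the support bookkeeping that keeps the two nilpotent corrections from interfering.
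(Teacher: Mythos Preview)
Your argument is correct and takes a genuinely different route from the paper. The paper builds $N_C$ directly as a sum $N_{1,t,k}+\sum_{i=2}^{c+1}N_{i,\,t+(i-1)(k-1),\,k}+\cdots$ from the same $N_{r,s,k}$ family used in Proposition~\ref{invB}, and then verifies invertibility from scratch by column and row operations that expose a block upper-triangular shape with the companion of $x^{t+k-2}+x^t+1$ in the top corner. You instead close the chain of $C$ with the elementary subdiagonal-plus-corner matrix $N_1$ to manufacture a type-(i) situation $B'=C(x^{t'}-1)\oplus 0$, and then invoke Proposition~\ref{invB} as a black box, shifting its slots into $\{2,\dots,t-1\}$ so that the supports of $N_1$ and $N_{B'}$ are disjoint and $N_1N_{B'}=N_{B'}N_1=0$. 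Your reduction is conceptually cleaner and makes $N_C^k=0$ immediate, at the cost of the support bookkeeping you flag; the paper's direct construction avoids re-invoking the earlier result but essentially repeats its invertibility calculation. Two small remarks: first, you rightly note that the two hypotheses in the statement are not literally equivalent and you work under the tighter one $r\le(k-1)(t-1)-1$ coming from $t-1\ge n/k$; this is precisely what Theorem~\ref{main} needs, but the paper's construction does cover the full range $r\le k-2+(t-1)(k-1)$, because its first piece $N_{1,t,k}$ straddles both slot $1$ and coordinate $t$, buying one extra slot's worth of capacity that your $N_1$ does not. Second, $x^{t'}-1$ is generally not a prime power, so $B'$ is not literally ``type (i)'' as defined; but the proof of Proposition~\ref{invB} uses only that the companion block is invertible, so your appeal to it is sound.
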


\begin{proof} If $r<k-2$,  we take the nilpotent matrix $N_{C}=N_{1,t,r+2}=e_{1,1}+e_{t,1}-e_{1,t+r}-\sum_{i=0}^{r}e_{t,t+i}+\sum_{i=0}^{r-1}e_{t+i+1,t+i}$; then
$$
U_C=C+N_{C}=\sum_{i=1}^{r-1}e_{i+1,i}+e_{1,1}+e_{t,1}-e_{1,t+r}-\sum_{i=0}^{r}e_{t,t+i}.
$$
Adding the column in position $t+r$ to the first column of $U_C$ and replacing row $t$ by the sum of that row and the rest of the rows below, we obtain
the matrix  $C(x^{n}+x^t+1)$, which is the   companion matrix of the polynomial $p(x)=x^{n}+x^t+1$, $p(0)\ne 0$, so $U_C$ is invertible. Moreover, since $r+2< k$, we have $N_C^{r+2}=N_C^k=0$.

If $r\ge k-2$,
arguing as in the proof of Proposition \ref{invB} but beginning at position (2,2), let us use the classical division theorem to write $r-k+2=c(k-1)+d$ with $0\le d< k-1$. Define
\begin{align*}
   N_C&=N_{1,t,k}\\
&+ N_{2,t+(k-1),k}+N_{3,t+2(k-1),k}+\dots+N_{c+1, t+c(k-1),k}\quad \hbox{ if $c>0$}\\
&+N_{c+2,t+(c+1)(k-1),d+1}\quad \hbox{ if $d>0$}\\
&=N_{1,t,k}+\underbrace{\sum_{i=2}^{c+1} N_{i, t+(i-1)(k-1),k}}_{\hbox{ if $c>0$}}+ \underbrace{N_{c+2,t+(c+1)(k-1),d+1}}_{\hbox{ if $d>0$}}.
\end{align*}
The matrix $N_C$ satisfies $N_C^k=0$, because it consists of orthogonal nilpotent matrices of the form $N_{r,s,j}$, $j\le k$, all of them satisfying $N_{r,s,j}^k=0$.

In order to see that $U_C=C+N_C$ is invertible, if
\begin{itemize}
\item we add to the first column of $U_C$ the column in position $t+k-2$,
\item we add to the second column of $U_C$ the one in position $t+2(k-1)-1$,
\item[] $\vdots$
\item we add to the $c$-column of $U_C$ the one in position $t+(c+1)(k-1)-1$,
\item we add to the $c+1$-column of $U_C$ the one in position $t+(c+1)(k-1)-1+d=n$,
\end{itemize}
and then we replace row $t$ by the sum of that row and  the rows $t+1$,\dots, $t+k-2$ below, we obtain
a matrix of the form
$$
\left(
  \begin{array}{c|c|c|c|c}
    C(x^{t+k-2}+x^t+1) & * & * & * & * \\
\hline
    0 & J_{k-1} & * & * & * \\
\hline
    0 & 0 & J_{k-1} & * & * \\
\hline
   0 & 0 & 0 & \ddots & * \\
\hline
    0 & 0 & 0 & 0 & J_{d} \\
  \end{array}
\right)
$$
where
$$
J_{r}=\left(
          \begin{array}{cccc}
            -1 & -1 & \dots  & -1 \\
            1& 0 & \dots & 0 \\
            0 & \ddots &  & 0 \\
            0 & 0 & 1 & 0 \\
          \end{array}
        \right)\in \mathbb{M}_{r}(\mathbb{F}),\quad  r=k-1 \hbox{ or } d.
$$
Since $\det(J_r)=(-1)^r$, $r=k-1$ or $d$, the determinant of $U_C$ coincides with $\pm$ the determinant of the companion matrix $C(x^{t+k-2}+x^t+1)$, which is nonzero, as needed.
\end{proof}

The next concrete construction will materialize the last proposition.

\begin{example}
Let us consider an index of nilpotence $k=4$ and the nilpotent matrix
$$
C=\left(
    \begin{array}{cccc|ccccc}
      0 & 0 & 0 & 0 & 0 & 0 & 0 & 0 & 0 \\
     1 & 0 & 0 & 0 & 0 & 0 & 0 & 0 & 0 \\
      0 & 1 & 0 & 0 & 0 & 0 & 0 & 0 & 0 \\
      0 & 0 & 1 & 0 & 0 & 0 & 0 & 0 & 0 \\
\hline
      0 & 0 & 0 & 0 & 0 & 0 & 0 & 0 & 0 \\
      0 & 0 & 0 & 0 & 0 & 0 & 0 & 0 & 0 \\
      0 & 0 & 0 & 0 & 0 & 0 & 0 & 0 & 0 \\
      0 & 0 & 0 & 0 & 0 & 0 & 0 & 0 & 0 \\
      0 & 0 & 0 & 0 & 0 & 0 & 0 & 0 & 0 \\
    \end{array}
  \right)\in \mathbb{M}_{9}(\mathbb{F})
$$
consisting on a nilpotent block of size $t=4$ and $r=5$ blocks of type (ii). Since $r\ge k-2$, imitating the proof of Proposition \ref{nilB} we first consider
$$
N_{1,4,4}=\left(
    \begin{array}{cccccc|ccc}
      1 & 0 & 0 & 0 & 0 & -1 & 0 & 0 & 0 \\
     0 & 0 & 0 & 0 & 0 & 0 & 0 & 0 & 0 \\
      0 & 0 & 0 & 0 & 0 & 0 & 0 & 0 & 0 \\
     1 & 0 & 0 & -1 & -1& -1& 0 & 0 & 0 \\
      0 & 0 & 0 & 1 & 0 & 0 & 0 & 0 & 0 \\
      0 & 0 & 0 & 0 & 1 & 0 & 0 & 0 & 0 \\
\hline
      0 & 0 & 0 & 0 & 0 & 0 & 0 & 0 & 0 \\
      0 & 0 & 0 & 0 & 0 & 0 & 0 & 0 & 0 \\
      0 & 0 & 0 & 0 & 0 & 0 & 0 & 0 & 0 \\
    \end{array}
  \right).
$$
Moreover, since $r=5$ and $k=4$, we can get $c=1$ and $d=0$ in the formula $r-k+2=c(k-1)+d$, so we also consider the matrix
$$
N_{2,7,4}=\left(
    \begin{array}{ccccccccc}
      0 & 0 & 0 & 0 & 0 & 0 & 0 & 0 & 0 \\
     0 & 1 & 0 & 0 & 0 & 0 & 0 & 0 & -1 \\
      0 &0 & 0 & 0 & 0 & 0 & 0 & 0 & 0 \\
      0 & 0 & 0 & 0 & 0 & 0 & 0 & 0 & 0 \\
      0 & 0 & 0 & 0 & 0 & 0 & 0 & 0 & 0 \\
      0 & 0 & 0 & 0 & 0 & 0 & 0 & 0 & 0 \\
      0 & 1 & 0 & 0 & 0 & 0 & -1 & -1 & -1 \\
      0 & 0 & 0 & 0 & 0 & 0 & 1 & 0 & 0 \\
      0 & 0 & 0 & 0 & 0 & 0 & 0 & 1 & 0 \\
    \end{array}
  \right).
$$
Thus $N_C=N_{1,4,4}+N_{2,7,4}$ satisfies $N_C^4=0$ and
$$
C+N_{C}=\left(
    \begin{array}{cccccc|ccc}
      1 & 0 & 0 & 0 & 0 & -1 & 0 & 0 & 0 \\
     1 & 1 & 0 & 0 & 0 & 0 & 0 & 0 & -1 \\
      0 &1 & 0 & 0 & 0 & 0 & 0 & 0 & 0 \\
      1 & 0 & 1 & -1 & -1 & -1 & 0 & 0 & 0 \\
      0 & 0 & 0 & 1 & 0 & 0 & 0 & 0 & 0 \\
      0 & 0 & 0 & 0 & 1 & 0 & 0 & 0 & 0 \\
\hline
      0 & 1 & 0 & 0 & 0 & 0 & -1 & -1 & -1 \\
      0 & 0 & 0 & 0 & 0 & 0 & 1 & 0 & 0 \\
      0 & 0 & 0 & 0 & 0 & 0 & 0 & 1 & 0 \\
    \end{array}
  \right)
$$
is invertible because, if we add the column 4 to column 1, add column 9 to column 2 and add rows 5 and 6 to row 4, we will receive
$$
\left(
    \begin{array}{ccc ccc|ccc}
      0 & 0 & 0 & 0 & 0 & -1 & 0 & 0 & 0 \\
     1 & 0& 0 & 0 & 0 & 0 & 0 & 0 & -1 \\
      0 &1 & 0 & 0 & 0 & 0 & 0 & 0 & 0 \\

      0 & 0 & 1 & 0 & 0 & -1 & 0 & 0 & 0 \\
      0 & 0 & 0 & 1 & 0 & 0 & 0 & 0 & 0 \\
      0 & 0 & 0 & 0 & 1 & 0 & 0 & 0 & 0 \\
\hline
      0 & 0 & 0 & 0 & 0 & 0 &  -1 & -1 & -1 \\
      0 & 0 & 0 & 0 & 0 & 0 & 1 & 0 & 0 \\
      0 & 0 & 0 & 0 & 0 & 0 & 0 & 1 & 0 \\
    \end{array}
  \right)=\left(
            \begin{array}{c|c}
              C(x^6+x^3+1) & * \\
\hline
              0 & J_3 \\
            \end{array}
          \right),
$$
which is clearly invertible, as promised.
\end{example}

Combining the previous two propositions we reach the main result which motivates writing of this article.

\begin{theorem}\label{main} Let $\mathbb{F}$ be a field, let $n\ge 2$, and let us fix $k\ge 1$. Given a nonzero matrix $A\in \mathbb{M}_{n}(\mathbb{F})$,  there exists an invertible matrix $U\in \mathbb{M}_{n}(\mathbb{F})$ and a nilpotent matrix $N\in \mathbb{M}_{n}(\mathbb{F})$ with $N^k=0$ such that $A=U+N$ if, and only if, the rank of $A$ is greater than or equal to $\frac{n}{k}$.
\end{theorem}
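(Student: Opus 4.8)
The plan is to treat necessity exactly as in Remark~\ref{necessarycondition}: if $A=U+N$ with $U$ invertible and $N^k=0$, then $n=\mathrm{rank}(U)=\mathrm{rank}(A-N)\le\mathrm{rank}(A)+\mathrm{rank}(N)$, and since $\mathrm{rank}(N)\le n-\frac nk$ for any nilpotent $N$ with $N^k=0$, this forces $\mathrm{rank}(A)\ge\frac nk$. For sufficiency I would first dispose of the trivial case $k=1$: then $N^1=0$ forces $N=0$, while $\mathrm{rank}(A)\ge n$ forces $A$ to be invertible, so $A=A+0$ works. Assume henceforth $k\ge 2$. Since invertibility and nilpotence are similarity invariants, I would replace $A$ by its primary rational canonical form, so that $A$ becomes a direct sum of companion matrices of prime powers, i.e.\ of blocks of types (i), (ii) and (iii) above; a further permutation similarity lets me reorder these blocks at will.

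Next I would set up the reduction to Propositions~\ref{invB} and~\ref{nilB}. After reordering, write $A$ as the direct sum of the invertible blocks $B_1,\dots,B_p$ of type (i) of sizes $t_1,\dots,t_p$, the nilpotent blocks $C_1,\dots,C_q$ of type (iii) of sizes $m_1,\dots,m_q\ge 2$, and $R$ blocks of type (ii); since $A\ne 0$ we have $p+q\ge 1$, and $n=\sum_i t_i+\sum_j m_j+R$ while $\mathrm{rank}(A)=\sum_i t_i+\sum_j(m_j-1)$. The idea is to parcel out the $R$ blocks of type (ii) among $B_1,\dots,B_p,C_1,\dots,C_q$ --- say $r_i$ of them glued next to $B_i$ and $r'_j$ glued next to $C_j$, with $\sum_i r_i+\sum_j r'_j=R$ --- in such a way that each resulting ``augmented'' block meets the hypothesis of the pertinent proposition, namely $\mathrm{rank}=t_i\ge(t_i+r_i)/k$ for the block $B_i$ with its $r_i$ adjoined zero blocks (Proposition~\ref{invB}) and $\mathrm{rank}=m_j-1\ge(m_j+r'_j)/k$ for the block $C_j$ with its $r'_j$ adjoined zero blocks (Proposition~\ref{nilB}). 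Granting this, each augmented block $X$ admits a nilpotent $N_X$ with $N_X^k=0$ and $X+N_X$ invertible; setting $N$ equal to the block-diagonal sum of the $N_X$ yields $N^k=0$ and makes $A+N$ a direct sum of invertible matrices, hence invertible.

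It remains to produce the distribution, and here the hypothesis $\mathrm{rank}(A)\ge\frac nk$ is used sharply. Rewriting these rank conditions, $B_i$ can absorb up to $r_i\le t_i(k-1)$ zero blocks and $C_j$ up to $r'_j\le(m_j-1)(k-1)-1$ of them, both bounds being nonnegative integers for $k\ge 2$ (the second can equal $0$, e.g.\ if $k=2$ and $m_j=2$). Hence the total absorbing capacity is
$$
\sum_i t_i(k-1)+\sum_j\bigl((m_j-1)(k-1)-1\bigr)=(k-1)\Bigl(\sum_i t_i+\sum_j(m_j-1)\Bigr)-q=(k-1)\,\mathrm{rank}(A)-q,
$$
and since $\sum_i t_i+\sum_j m_j=\mathrm{rank}(A)+q$ one has $R=n-\mathrm{rank}(A)-q$; therefore the capacity is $\ge R$ precisely when $k\,\mathrm{rank}(A)\ge n$, i.e.\ exactly under the hypothesis. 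As the capacities are nonnegative integers summing to at least $R$ and $p+q\ge 1$, a greedy assignment --- fill each augmented block up to its capacity before moving on to the next --- places all $R$ zero blocks, so the distribution exists.

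I expect the genuine work to be concentrated in this last distribution step: pinning down the per-block capacities in the rank form of the hypotheses of Propositions~\ref{invB} and~\ref{nilB}, verifying that they telescope to $(k-1)\,\mathrm{rank}(A)-q$, and checking the degenerate configurations --- blocks of zero capacity, the absence of any type (ii) block, a single already-invertible block, and the boundary behaviour for $k=2$ and for large $k$ --- so that the two propositions really do apply to every augmented block. Once the capacity identity is established, its equivalence with $\mathrm{rank}(A)\ge\frac nk$ is immediate and the theorem follows.
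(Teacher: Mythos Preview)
Your argument is correct and follows essentially the same route as the paper's proof: necessity via the rank inequality of Remark~\ref{necessarycondition}, then passage to the primary rational canonical form, distribution of the type~(ii) blocks among the type~(i) and type~(iii) blocks, and application of Propositions~\ref{invB} and~\ref{nilB} on each augmented block. Your explicit capacity computation (showing the total capacity equals $(k-1)\,\mathrm{rank}(A)-q$ and that this is $\ge R$ exactly when $\mathrm{rank}(A)\ge n/k$) makes rigorous what the paper simply asserts; note also that your per-block capacity $(m_j-1)(k-1)-1$ for $C_j$ is actually slightly smaller than the bound $k-2+(m_j-1)(k-1)$ stated in Proposition~\ref{nilB}, so your distribution argument is a fortiori valid.
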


\begin{proof} As already mentioned in Remark \ref{necessarycondition}, a necessary condition to express $A$ as the sum $U+N$, where $U$ is invertible and $N^k=0$, is that the rank of $A$ is greater than or equal to $\frac nk$ because $n={\rm rank}(U)={\rm rank} (A-N)\le {\rm rank} (A)+{\rm rank}(N)\le {\rm rank} (A)+n-\frac nk$.

Conversely, suppose that the rank of $A\in \mathbb{M}_{n}(\mathbb{F})$ is no less than $\frac nk$. Without loss of generality, we may assume that $A$ is expressed in its primary rational canonical form, i.e., it is a sum of the companion matrices  of its elementary divisors. Let us show  that there will exist an invertible matrix $U$ and a nilpotent matrix $N$, $N^k=0$, such that $A=U+N$, as pursuing.

Let us reorder the blocks of matrix $A$ -- which just corresponds to a reordering of the elementary divisors of $A$ -- as follows:
\begin{itemize}
  \item [(1)] we follow each $t_i\times t_i$ block $B_i$ of type (i) by $s_i$   blocks of type (ii) ($s_{i}\le t_i(k-1)$),

  \medskip

  \item [(2)] we follow each $t_j\times t_j$ block $C_j$ of type (iii) by $r_j$   blocks of type (ii) ($r_{j}\le k-2+(t_j-1)(k-1)$).
\end{itemize}
such that $\sum_{i} s_i+\sum_j r_j=n-{\rm rank}(A)$.
The rank of $A$ guarantees that all blocks of type (ii) can be distributed and combined with either blocks of type (i) or with blocks of type (iii) by following points (1) and (2).

We, thereby, have the following two cases:

$\bullet$ In accordance with Proposition \ref{invB}, for every invertible block $B_i$ of rank $t_i$ which is followed by $s_i$ blocks of type (ii), $s_i\le t_i(k-1)$, there exists a nilpotent matrix $N_{B_i}$ such that $N_{B_i}^k=0$ such that $B_i+N_{B_i}$ is an  invertible matrix of rank $t_i+s_i$.

$\bullet$ In accordance with Proposition \ref{nilB}, for every nilpotent  block $C_j$ of index $t_j$ and rank $t_j-1$ which is followed by $r_j$ blocks of type (ii),  $r_j\le k-2+(t_j-2)(k-1)$, there exists a nilpotent matrix $N_{C_j}$ such that $N_{C_j}^k=0$ such that $C_j+N_{C_j}$ is an  invertible matrix of  rank $t_j+r_j$.

Now, define $N=-\sum_i N_{B_i}-\sum_j N_{C_j}$. Since the nilpotent matrices that we add are mutually orthogonal, we therefore can get a nilpotent matrix $N$ with $N^k=0$ and such that $U=A-N$ is invertible:
$$
A-N=\left(
  \begin{array}{c|c|c|c|c|c}
    B_1+N_{B_1} & 0 & 0 & 0 & 0 & 0 \\
\hline
    0 &  B_2+N_{B_2} & 0 & 0 & 0 & 0 \\
\hline
    0 & 0 & \ddots & 0 & 0 & 0 \\
\hline
    0 & 0 & 0 &  C_1+N_{C_1} & 0 & 0 \\
\hline
    0 & 0 & 0 & 0 &  C_2+N_{C_2} & 0 \\
\hline
    0 & 0 & 0 & 0 & 0 & \ddots \\
  \end{array}
\right).
$$
Finally, we decompose $A=U+N$, as stated.
\end{proof}

In conclusion, it is worthwhile noticing that the key tool in our arguments is the primary rational canonical form of any square matrix, which holds for matrices over arbitrary fields. However, since the mentioned above Calugareanu-Lam's result from \cite{CL} about the decomposition of matrices into invertible and nilpotent is true for matrices over division rings \cite[Remark 3.12]{CL}, we can close our work by posing the following query:

\medskip

\noindent{\bf Open Problem:} Given a fixed bound $k\ge 1$ for the index of nilpotence, find necessary and sufficient conditions to expressed every nonzero square matrix over a division ring as the sum of an invertible matrix and a nilpotent matrix $N$ with $N^k=0$.

\bigskip
\bigskip
\bigskip

\noindent{\bf Funding:} The first-named author (Peter V. Danchev) was supported in part by the Bulgarian National Science Fund under Grant KP-06 No. 32/1 of December 07, 2019, the second-name author (Esther Garc\'{\i}a) was partially supported by Ayuda Puente 2022, URJC. The three authors were partially supported by the Junta de Andaluc\'{\i}a FQM264.

\vskip3.0pc

\bibliographystyle{plain}

\end{document}